\title{Graph covers with two new eigenvalues}
\author[1]{Chris Godsil\thanks{C. Godsil acknowledges support of NSERC (Canada), Grant No. RGPIN-9439.}}
\author[1]{Maxwell Levit\thanks{M. Levit and O. Silina acknowledge the support of Chris Godsil.}}
\author[1]{Olha Silina$^\dagger$}
\affil[1]{Department of Combinatorics \& Optimization, University of Waterloo}
\date{October 7, 2020}
\DeclareMathOperator{\sym}{Sym}
\newcommand{\bb}[1]{\mathbb{#1}}
\newcommand{\mc}[1]{\mathcal{#1}}
\newcommand{\pa}[1]{\left(#1\right)} 
\newcommand{\IGNORE}[1]{}
\newcommand{\mat}[1]{\begin{matrix}#1\end{matrix}} 
\newcommand{\pmat}[1]{\pa{\mat{#1}}} 
\newtheoremstyle{plainsl}%
	{\topsep}
	{\topsep}
	{\slshape} 
	{}
	{\normalfont\bfseries}
	{.}
	{ }
	{}
\theoremstyle{plainsl}
\newtheorem{theorem}{Theorem}[section]
\newtheorem{lemma}[theorem]{Lemma}
\newtheorem{prop}[theorem]{Proposition}
\newtheorem{corollary}[theorem]{Corollary}}
\theoremstyle{remark}
\newcommand\sqr[2]{{\vbox{\hrule height.#2pt
    \hbox{\vrule width.#2pt height#1pt \kern#1pt
        \vrule width.#2pt}\hrule height.#2pt}}}
\renewcommand\qed{%
	\ifmmode\eqno\sqr53
	\else\nolinebreak\ \hfill\sqr53\medbreak\fi}
\begin{document}
 \bibliographystyle{acm}

\maketitle

\begin{abstract}
A certain signed adjacency matrix of the hypercube, which Hao Huang used last year to resolve the Sensitivity Conjecture, is closely related to the unique, 4-cycle free, 2-fold cover of the hypercube. We develop a framework in which this connection is a natural first example of the relationship between group valued adjacency matrices with few eigenvalues, and combinatorially interesting covering graphs. In particular, we define a \textbf{two-eigenvalue cover}, to be a covering graph whose adjacency spectra differs (as a multiset) from that of the graph it covers by exactly two eigenvalues. We show that walk regularity of a graph implies walk regularity of any abelian two-eigenvalue cover. We also give a spectral characterization for when a cyclic two-eigenvalue cover of a strongly-regular graph is distance-regular.

\end{abstract}

\section{Introduction}
In 1985, Arjeh Cohen and Jacques Tits proved the existence and uniqueness of $\widetilde{Q}_n$, a girth six, 2-fold cover of the hypercube $Q_n$. A 2-fold cover of a graph can be specified by a $\pm1$ signing of its adjacency matrix, and the eigenvalues of this cover are  the union of the eigenvalues of the original graph and of the signed matrix. Last year, Hao Huang resolved the Sensitivity Conjecture by proving that every $(2^{n-1}+1)$ vertex induced subgraph of $Q_n$ has maximum degree at least $\sqrt{n}$. His proof relies on constructing a $\pm1$ signing of the adjacency matrix of $Q_n$ which specifies exactly the cover $\widetilde{Q}_n$ described above (see Section \ref{cttohh}). Motivated by this connection, we develop a natural framework for both results. This framework extends to covers of larger index, or equivalently, adjacency matrices whose ``signings'' take values in larger cyclic groups represented by complex roots of unity.  As in the $\pm1$ signing case, the spectra of these complex adjacency matrices encode the new eigenvalues of our covering graphs. Our key definition is that of a \textbf{two-eigenvalue cover}, (or 2ev-cover) which is a covering graph whose adjacency spectra differs (as a multiset) from that of the graph it covers by exactly two eigenvalues. It is worth noting that the only symmetric or hermitian matrices with exactly one eigenvalue are scalar multiples of the identity matrix. Consequently, there is no analogous notion of a ``1ev-cover,'' so we are studying the extremal case.
 
2ev-covering graphs occur frequently in the study of distance-regular graphs, where this two eigenvalue condition is necessary for a lifted graph to retain the distance-regularity of its quotient. However, this condition is not sufficient: $Q_n$ is distance-regular, while $\widetilde{Q}_n$ is not. This begs the question of which combinatorial properties are necessarily preserved by 2ev-covers.  By employing ideas of Godsil and Hensel \cite{GH} we answer this question when the structure of the cover in question is governed by an abelian group. In particular we show walk regularity, a weaker version of distance-regularity, is preserved in abelian two eigenvalue covers. We then show that any cyclic 2ev-cover of the complete graph is a distance-regular and antipodal. Any cyclic 2ev-cover of a complete bipartite graph is distance-regular with diameter four. More generally, we give a necessary and sufficient condition for a cyclic 2ev-cover of a strongly regular graph to be distance-regular. We conclude with some examples and directions for future work.

The theory of covering graphs was first formally developed in 1974 by Biggs \cite{NB} and independently by Gross \cite{JG}. Biggs devoted a chapter of his book to setting down a categorical definition and explicating several interesting examples. Meanwhile, Gross developed an analogous theory as a tool for studying graph embeddings. This topological approach was echoed by Cohen and Tits \cite{CT}, who described $\widetilde{Q}_n$ as the cover corresponding to a particular index 2 subgroup of the fundamental group of the (1-skeleton of) the hypercube. 

Even before these formal treatments, many interesting covering graphs were known to mathematicians studying distance-regular graphs. This connection can be described concretely by a result of Smith which states that every distance-regular graph is either primitive, bipartite, or antipodal (and hence a cover). See \cite{BCN}, Chapter 4. In 1990, Godsil and Hensel developed the theory of the distance-regular antipodal covers of the complete graph (drackn's). They described the relation between such covers and regular two-graphs, as well as a connection to sets of equiangular lines in euclidean space. \cite{GH}, \cite{LS}. This work was extended by Coutinho et. al. \cite{CGSZ}  to relate drackns to sets of complex lines, and by Iverson and Mixon \cite{IM}, to a broader class of lines called roux.

Backtracking to 1987, Gross and Tucker \cite{GT} expanded Gross's work and defined the permutation voltage graphs, from which all covering graphs can be constructed.  Zaslavsky  \cite{TZ} studied these graphs from a matroidal perspective under the name gain graphs. This study has blossomed in many different directions whose references are recorded by Zaslavsky in the dynamic bibliography \cite{TZ2}. One recent example is a paper of Cavaleri, D'Angeli, and Donno \cite{CDD}, wherein the authors answer a question of Zaslavsky on the algebraic characterization of balanced gain graphs.  In the final section of this paper, the authors call upon the connection between gain graphs and covering graphs to translate their results to the latter setting. In particular, they show that a certain condition on the balancedness of cycles is equivalent to the associated cover being isomorphic to a disjoint union of copies of the underlying graph. This is closely related (but of an opposite flavor) to our Theorem \ref{wreg1} on the walk regularity of 2ev-covers.  Cavaleri, D'Angeli, and Donno have given a spectral characterization of the covering graphs that are in some sense trivial while we are interested in those that have interesting combinatorial structure. 

Running parallel to the inquiries above, the spectral theory of $\pm1$ signings of adjacency matrices has been well developed. The recent advances in this area have, at least partially, been inspired by Bilu and Linial's \cite{BL} proposed construction of expander graphs via an iterative sequence of 2-fold covers, and by Marcus, Spielman, and Srivastava's \cite{MSS} great success in implementing this proposal. Interest has again been renewed by Hao Huang's \cite{HH} resolution of the sensitivity conjecture, which is predicated on signed adjacency matrices with exactly two distinct eigenvalues. Even prior to Huang's proof, the taxonomy of two-eigenvalue signed graphs had begun to emerge, see \cite{ZS}, \cite{GF},\cite{MS}. Moreover, see \cite{BCKW} for a survey of results on two-eigenvalue signed graphs and some interesting open problems.

\section{Structure of covering graphs}
We begin with a connected undirected graph $Y=(V,E)$ and a permutation group $G$. Define a \textbf{symmetric arc function} to be a function $f: V\times V \rightarrow G\cup\{0\}$ for which $f(u,v)=f(v,u)^{-1}$ whenever $f(u,v)\in G$ and $f(u,v)=0$ if and only if $\{u,v\}\notin E(Y)$. The pair $(Y,f)$ is called a \textbf{gain graph}, and can be used to define a cover of $Y$ as follows. Let $R=\{1,2,\dots, r\}$ be a set on which $G$ acts. The \textbf{cover} $Y^f$ of $Y$ is the graph with vertex set $V\times R$ and $(v,j)\sim_{Y^f}(u,k)$ exactly when $v\sim_Yu$ and $f(u,v)j=k$. 

Many structural characteristics of $Y^f$ are immediate. If $Y$ is a regular graph, then $Y^f$ is regular with the same valency. This can be considered as a prototypical example of the ``combinatorial regularity'' of covering graphs that we are interested in studying. 

There exists a graph homomorphism $\gamma:Y^f\rightarrow Y$ defined by $\gamma(v,j)\mapsto v$; it is called the \textbf{covering map}. The preimages $\gamma^{-1}(v)$ are called the \textbf{fibers} of $Y^f$, and they induce cocliques (edgeless subgraphs). The subgraph induced by any two fibers $\gamma^{-1}(u),\gamma^{-1}(v)$ is a perfect matching if $\{u,v\}\in E$ and is a coclique otherwise.

Two distinct symmetric arc functions can yield isomorphic covering graphs. In particular, permuting the vertices of each fiber $\gamma^{-1}(u)$ of $Y^f$ by some $\sigma_u$ in the symmetric group $\sym(r)$ gives rise to an isomorphic cover $Y^g$ with symmetric arc function $g(i,j)=\sigma^{-1}_i f(i,j)\sigma_j$. Whenever we choose these permutations so that $g(i,j)=id$ for all edges $\{i,j\}$ of some spanning tree of $Y$ we call $g$ a  \textbf{normalized} symmetric arc function.

\section{Representations of finite groups}
Let $Y^f$ as above with $f$ a normalized arc function. Denote by $\langle f \rangle$ the subgroup of $\sym(r)$ generated by the image of $f$. The linear representations of this group are a valuable tool in studying the cover $Y^f$. To keep this paper self contained we will briefly mention the basic representation theory that is relevant to our work. The following discussion is essentially identical to (a subset of) that in Sections 8 and 9 of \cite{GH}. A more detailed account can be found in any text on the representation theory of finite groups, e.g, \cite{JL}. 
  
  A \textbf{representation} $\phi$ of a group $G$ is a homomorphism into the general linear group $GL(r,\bb{C})$. We say that $r$ is the \textbf{degree} of the representation. For each $A\in GL(r,\bb{C})$ the map $\phi^A(g)$ defined by \[\phi^A(g)=A\phi(g)A^{-1}\] is also a representation of $G$, and is said to be \textbf{equivalent} to $\phi$. The \textbf{trivial} representation is the map from $G$ onto the identity matrix $I_r$. If $\phi$ and $\psi$ are representations of $G$ with degrees $r$ and $s$ respectively then the map $\phi+ \psi$ defined by \[g\mapsto \pmat{\phi(g)&0\\0&\psi(g)}\] is a representation of degree $r+s$. We say that $\phi+ \psi$ is the sum of $\phi$ and $\psi$. The set of representations of $G$ is closed under non-negative integral linear combinations. A representation $\phi$ is \textbf{irreducible} if there is no nontrivial subspace of $\bb{C}^r$ which is invariant under $\phi(g)$ for all $g\in G$. It can be shown that a representation of a finite group is reducible if and only if it is equivalent to a nontrivial positive integral linear combination of representations of $G$.

Define a vector space $\mc{V}$ to be the $\bb{C}$-span of the elements of $G$ and define a $G$-action on this vector space where each $g\in G$, acts on the basis vectors by left multiplication (and the action is extended linearly). In the basis of group elements of $G$, each $g$ acts as a permutation matrix $\rho(g)$. The map $\rho: G\rightarrow GL(|G|,\bb{C})$ is called the (left) \textbf{regular representation} of $G$.

  \begin{theorem}
  Let $G$ be a finite group. Then
 \begin{enumerate}
\item[(a)] $G$ has finitely many inequivalent irreducible representations $\phi_i$ for $(i=0,1,\dots)$.
\item[(b)] If $r_i$ denotes the degree of $\phi_i$ and $\rho$ is the regular representation of $G$ then $\rho$ is equivalent to $\sum_{i}r_i\phi_i$.
\item[(c)] If $G=H_1\times H_2$, then the regular representation of $G$ is the tensor product of the regular representations of $H_1$ and $H_2$.
\item[(d)]  If $G$ is abelian then each $r_i=1$ and $\phi_i(g)$ is a (not necessarily primitive) $m$th root of unity where $m$ is the order of $G$.\qed

 \end{enumerate}
  \end{theorem}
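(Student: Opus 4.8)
The plan is to derive all four parts from standard facts about complex representations of finite groups---Maschke's theorem, the orthogonality relations for characters, and Schur's lemma---none of which is stated in the excerpt, though its pointers to \cite{GH} and \cite{JL} suggest these may simply be invoked. Parts (a) and (b) I would handle together via the regular representation $\rho$. By Maschke's theorem $\rho$ decomposes as a sum of irreducibles, and only finitely many appear since $\deg\rho=|G|$. If $\phi$ is any irreducible of $G$, then by the orthogonality relations its multiplicity in $\rho$ equals $\langle\chi_\rho,\chi_\phi\rangle$; and since the trace of $\rho(g)$ counts the fixed points of left multiplication by $g$ on $G$, we have $\chi_\rho(e)=|G|$ and $\chi_\rho(g)=0$ for $g\neq e$, so, keeping only the $g=e$ term,
\[ \langle\chi_\rho,\chi_\phi\rangle=\frac{1}{|G|}\sum_{g\in G}\chi_\rho(g)\overline{\chi_\phi(g)}=\overline{\chi_\phi(e)}=\deg\phi. \]
Hence every irreducible of $G$ occurs in $\rho$, so up to equivalence there are at most $|G|$ of them, which is (a); labelling them $\phi_0,\phi_1,\dots$ with $\phi_0$ trivial, each $\phi_i$ then occurs in $\rho$ with multiplicity equal to its degree $r_i$, which is (b).

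For (c), identify $\mc{V}$, the $\bb{C}$-span of $G=H_1\times H_2$, with $\mc{V}_1\otimes\mc{V}_2$ (the corresponding spaces for $H_1$ and $H_2$) by matching the basis vector $(h_1,h_2)$ with $h_1\otimes h_2$. Left multiplication by $(g_1,g_2)$ sends $(h_1,h_2)$ to $(g_1h_1,g_2h_2)$, which under this identification is exactly the action of $\rho_{H_1}(g_1)\otimes\rho_{H_2}(g_2)$ on $h_1\otimes h_2$. Therefore $\rho_G(g_1,g_2)=\rho_{H_1}(g_1)\otimes\rho_{H_2}(g_2)$, i.e.\ $\rho_G$ is the tensor product of the regular representations of $H_1$ and $H_2$.

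For (d), let $\phi$ be an irreducible of an abelian $G$. For each $h\in G$ the matrix $\phi(h)$ commutes with every $\phi(g)$, since $hg=gh$, so by Schur's lemma $\phi(h)$ is a scalar matrix; consequently every line in $\bb{C}^r$ is $\phi$-invariant, and irreducibility forces $r=1$. Thus $r_i=1$ for all $i$. Finally, writing $m=|G|$, Lagrange's theorem gives $g^m=e$ for every $g\in G$, so $\phi_i(g)^m=\phi_i(g^m)=\phi_i(e)=1$, and the scalar $\phi_i(g)$ is an $m$th root of unity---though not necessarily primitive, since for instance $\phi_0(g)=1$ for all $g$.

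The only genuine obstacle is one of scope: whether to prove or merely cite Maschke's theorem, the character orthogonality relations, and Schur's lemma. Since the excerpt points to standard references for these facts, I would cite them; were they to be developed from scratch, the real content would be the averaging argument behind complete reducibility together with Schur's lemma, after which parts (a)--(d) follow as above.
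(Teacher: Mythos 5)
Your proof is correct; all four parts are the standard arguments (character of the regular representation for (a) and (b), the identification $\bb{C}[H_1\times H_2]\cong\bb{C}[H_1]\otimes\bb{C}[H_2]$ for (c), and Schur's lemma plus Lagrange for (d)). The paper itself gives no proof at all---the theorem is stated with a \qed and the reader is referred to \cite{JL} for details---so your decision to cite Maschke, orthogonality, and Schur rather than reprove them matches the paper's intent, and the derivation you supply on top of those facts is a correct filling-in of what the paper omits.
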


We have described above how to construct $Y^f$ from $(Y,f)$. When $\langle f\rangle$ acts regularly on the vertices of a fiber, the regular representation of $\langle f \rangle$ encodes this process as follows.  Let $A=A(Y)$ denote the adjacency matrix of $Y$, and suppose  $\phi$ is a degree $r$ representation of $\langle f \rangle$, define $A^{\phi(f)}$ to be the matrix obtained by replacing each non-zero entry $A_{u,v}$ with the matrix $\phi(f(u,v))$ and each zero entry with the $r\times r$ matrix of zeros.

 \begin{theorem}\label{rep1} (\cite{GH} Sections 7, 8)
 If $f$ is a normalized arc function then $\langle f \rangle$ acts regularly on the vertices of each fiber of $Y^f$ if and only if $A(Y^f)=A(y)^{\rho(f)}$.\qed
 \end{theorem}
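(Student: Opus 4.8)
The plan is to read the identity $A(Y^f)=A(Y)^{\rho(f)}$ directly off the construction of $Y^f$, block by block over the vertices of $Y$, using one standard fact about permutation groups. If $H\le\sym(R)$ acts regularly on $R$ then, fixing any $r_0\in R$, the map $\theta\colon H\to R$, $h\mapsto hr_0$, is a bijection, and through $\theta$ the action of an element $h\in H$ on $R$ becomes left multiplication by $h$ on $H$; in the basis of group elements this permutation is, by definition, the matrix $\rho(h)$. Since the action of $\langle f\rangle$ on the vertices of a fiber $\{v\}\times R$ is nothing but its action on $R$, ``acting regularly on each fiber'' is the same as ``acting regularly on $R$.'' (The normalization hypothesis on $f$ is the standing assumption of this section, inherited from \cite{GH}; it is not actually used below, since the image of $f$ generates $\langle f\rangle$ in any case.)

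For the forward implication, assume $\langle f\rangle$ acts regularly on $R$, fix $r_0$ and $\theta$ as above, and relabel the vertex $(v,\theta(h))$ of $Y^f$ as $(v,h)$, so that every fiber is indexed by $\langle f\rangle$. By the definition of the cover, $(u,g)\sim_{Y^f}(v,h)$ holds exactly when $\{u,v\}\in E(Y)$ and $f(u,v)\,\theta(h)=\theta(g)$, and since $\theta$ turns the action into left multiplication this is equivalent to $g=f(u,v)h$. Hence the block of $A(Y^f)$ with rows in the fiber of $u$ and columns in the fiber of $v$ is the matrix whose $(g,h)$ entry is $1$ when $g=f(u,v)h$ and $0$ otherwise, which is precisely $\rho(f(u,v))$; and when $\{u,v\}\notin E(Y)$ there are no edges between the two fibers, so the block is zero. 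Comparing with the definition of $A(Y)^{\rho(f)}$ --- each $1$ of $A(Y)$ replaced by $\rho(f(u,v))$ and each $0$ by a zero block --- yields $A(Y^f)=A(Y)^{\rho(f)}$.

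For the converse, assume $A(Y^f)=A(Y)^{\rho(f)}$. Comparing orders gives $|V|\,r=|V|\,|\langle f\rangle|$, so $r=|\langle f\rangle|$, and the equality of the two matrices is realized by some bijection $\iota\colon R\to\langle f\rangle$ identifying the index set of each fiber with $\langle f\rangle$. Transporting the block over an edge $\{u,v\}$ through $\iota$, the permutation of $\langle f\rangle$ effected by $f(u,v)$ has $(g,h)$ entry $1$ iff $g=f(u,v)h$, i.e. it is left multiplication by $f(u,v)$. The nonzero values of $f$, namely the $f(u,v)$ for $\{u,v\}\in E(Y)$, generate $\langle f\rangle$ by definition; so the $\iota$-transported action of $\langle f\rangle$ on $R$ and the left-regular action of $\langle f\rangle$ on itself are homomorphisms $\langle f\rangle\to\sym(\langle f\rangle)$ agreeing on a generating set, hence equal. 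Therefore the action of $\langle f\rangle$ on $R$ is permutation-isomorphic to its left-regular action, so it is regular, and so is the induced action on every fiber.

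I expect no genuine obstacle; the content is a careful unwinding of definitions. The one thing that needs attention is bookkeeping: the very statement $A(Y^f)=A(Y)^{\rho(f)}$ already presupposes an identification of the size-$r$ index set of a fiber with the size-$|\langle f\rangle|$ index set of $\rho$ --- which is why the order count and the bijection $\iota$ enter the converse --- and one must keep conventions consistent throughout, for instance tracking the inverse from $f(v,u)=f(u,v)^{-1}$ when checking that the $(u,v)$ and $(v,u)$ blocks of $A(Y^f)$ are transposes of one another. Once the blocks of $A(Y^f)$ are written out explicitly, both implications are forced.
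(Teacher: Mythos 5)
Your proof is correct. The paper itself gives no argument for this statement --- it is quoted from Godsil and Hensel \cite{GH} with the \qed attached to the statement --- so there is nothing internal to compare against; your block-by-block unwinding (identify $R$ with $\langle f\rangle$ via $h\mapsto hr_0$ so the fiber action becomes left multiplication, read off each edge block as $\rho(f(u,v))$, and in the converse use that the $f(u,v)$ generate $\langle f\rangle$ so two homomorphisms into $\sym(R)$ agreeing on them coincide) is exactly the standard verification one finds in that reference. Your parenthetical observation that the normalization hypothesis is not actually needed for this particular equivalence is also right: normalization matters elsewhere in \cite{GH} (e.g.\ for relating connectivity of $Y^f$ to transitivity of the group generated by the values of $f$ on non-tree edges), but both directions of this equivalence go through for any symmetric arc function, modulo the labelling bookkeeping you correctly flag.
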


If $Y^f$ is connected then $\langle f \rangle$ acts transitively on $\{1,\dots, r\}$ and $|\langle f \rangle|=r\cdot |stab(x)|$ for any $x\in \{1,\dots r\}$. If $\langle f \rangle$ is abelian we may quotient out this stabilizer, considering the resulting abelian subgroup acting regularly on $\{1,\dots r\}$. In this paper we will be concerned with abelian covers, and we will thus assume that our groups $\langle f \rangle$ act regularly on $\{1,\dots, r\}$. Hence we will assume that $A(Y^f)=A(Y)^{\rho(f)}$.
 
This issue is more subtle if $\langle f \rangle$  is not abelian: We certainly may always work with the regular representation of $\langle f \rangle$, but we will often miss out on certain covers of small index in the process. This is essentially the distinction between ordinary voltage graphs and permutation voltage graphs as explicated by Gross and Tucker \cite{GT}. See Section \ref{nonab} for an example of a cover that is not regular.

 \section{Two eigenvalue covers}
Since $\rho$ is equivalent to $\sum_{i=0}^{|\langle f \rangle |}\phi_i$, it follows that $A(Y)^\rho$ is equivalent to a block diagonal matrix whose blocks are the $A(Y)^{\phi_i(f)}$. The first of these blocks is $A(Y)^{\phi_0(f)}=A(Y)$, from which we deduce that the spectrum of $A(Y)$ is a subset of the spectrum of $A(Y^f)$. Let $\mathrm{Spec}(A)$ denote the spectrum of $A$ as a multi-set. 
 
We say $Y^f$ is a \textbf{two-eigenvalue cover} (2ev-cover) if $\mathrm{Spec}((A(Y^f))\setminus \mathrm{Spec}((A(Y))$ has exactly two distinct elements.  Suppose $Y^f$ is a 2ev-cover, then each $A(Y)^{\phi_i(f)}$ for $i>1$ must be roots of the same minimal polynomial (of degree 2). Our primary means of studying 2ev-covers is via these $A(Y)^{\phi_i(f)}$.
 
\begin{prop} 
Let $\phi$ be some non-trivial irreducible representation of $\langle f \rangle$ and let $Y^f$ be a cover for which $S=A(Y)^{\phi(f)}$ has exactly two distinct eigenvalues $\theta$ and $\tau$. Then there exist real numbers $\lambda, \mu$ so that  \[S^2=\lambda S+\mu I,\] Moreover, $Y$ is regular with valency $\mu=-\theta\tau$.
\end{prop}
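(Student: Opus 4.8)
The plan is to first turn the spectral hypothesis on $S$ into a quadratic matrix identity, and then recover the valency of $Y$ by comparing the diagonal blocks on the two sides of that identity.

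For the first part, note that we may assume $\phi$ is unitary: every irreducible representation of a finite group is equivalent to a unitary one, and replacing $\phi$ by an equivalent representation only conjugates $S=A(Y)^{\phi(f)}$ by an invertible (block-diagonal) matrix, which affects neither its eigenvalues nor the truth of an identity of the form $S^2=\lambda S+\mu I$. With $\phi$ unitary we have $\phi(g^{-1})=\phi(g)^{-1}=\phi(g)^{*}$ for every $g$, so the $(v,u)$ block $\phi(f(v,u))=\phi(f(u,v)^{-1})=\phi(f(u,v))^{*}$ is the conjugate transpose of the $(u,v)$ block, while the diagonal blocks of $S$ vanish because $A(Y)$ has zero diagonal. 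Hence $S$ is Hermitian with zero diagonal; in particular $\theta,\tau$ are real and $S$ is diagonalizable, so its minimal polynomial is exactly $(x-\theta)(x-\tau)$. Expanding $(S-\theta I)(S-\tau I)=0$ gives
\[
S^{2}=(\theta+\tau)\,S-\theta\tau\,I ,
\]
which is the desired relation with $\lambda=\theta+\tau$ and $\mu=-\theta\tau\in\bb{R}$.

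For the second part, compare the $r\times r$ diagonal blocks of the two sides of $S^{2}=\lambda S+\mu I$ indexed by a vertex $u$ of $Y$. Since $S_{uu}=0$, the right-hand block equals $\mu I_{r}$, while
\[
(S^{2})_{uu}=\sum_{v}S_{uv}S_{vu}=\sum_{v\,\sim_{Y}\,u}\phi(f(u,v))\,\phi(f(u,v))^{-1}=\deg_{Y}(u)\,I_{r}.
\]
Equating these forces $\deg_{Y}(u)=\mu$ for every vertex $u$, so $Y$ is regular with valency $\mu=-\theta\tau$. (When $\phi$ has degree $1$ the middle computation is simply $\sum_{v\,\sim_{Y}\,u}\abs{\phi(f(u,v))}^{2}=\deg_{Y}(u)$, since each $\phi(f(u,v))$ is a root of unity.)

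The computation itself is routine; the one place that deserves attention is the reduction to a Hermitian $S$. This is what makes $\theta$ and $\tau$ real and, crucially, rules out a $2\times 2$ Jordan block, so that having exactly two distinct eigenvalues really does pin the minimal polynomial down to the squarefree quadratic $(x-\theta)(x-\tau)$; everything after that is bookkeeping with the block structure.
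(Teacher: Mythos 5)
Your proof is correct and follows essentially the same route as the paper's: establish that $S$ is Hermitian (hence diagonalizable with real eigenvalues $\theta,\tau$), read off the quadratic minimal polynomial, and identify $\mu$ with the vertex degrees by inspecting the diagonal of $S^2$. You are somewhat more explicit than the paper about the reduction to a unitary representation and the block-level computation $(S^2)_{uu}=\deg_Y(u)\,I_r$ (the paper simply notes the diagonal of $S^2$ agrees with that of $A(Y)^2$), and your sign $\lambda=\theta+\tau$ is the correct one for the convention $x^2-\lambda x-\mu$.
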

 
\begin{proof}
The minimal polynomial of $S$ has degree 2 and may be written as $x^2-\lambda x-\mu$. Since $f$ is a symmetric arc function, $S$ is hermitian, and has real eigenvalues $\theta, \tau$. We have $\mu=-\theta\tau$ and $\lambda=-\theta-\tau$. Moreover $S$ satisfies the equation $S^2=\lambda S+\mu I$. Since $Y$ is loopless, $S$ has zero diagonal. Finally, the diagonal entries of $S^2$ are equal to the diagonal entries of $A(Y)^2$ which are the valency of the vertices of $Y$.\qed
\end{proof}
 
 \textbf{Remark:}  In the event that $\langle f \rangle$ is abelian, its irreducible representations have degree 1, and for any non-trivial irreducible representation $\phi$, the matrix $S=A(Y)^{\phi(f)}$ is particularly familiar. Here $S$ is the group valued adjacency matrix of a complex unit-gain graph. This is a direct generalization of the signed adjacency matrix, a setting where the 2ev condition has already been extensively investigated.

\section{Covers of walk regular graphs}

A graph is \textbf{walk regular} if the number of closed walks of length $k$ starting from (and ending at) vertex $v$ depends only on $k$. Equivalently, a graph with adjacency matrix $A$ is walk regular if $A^k$ has constant diagonal for all positive integers $k$. Walk regular graphs have been studied due to their ``spectral regularity:'' The subgraphs $W/v$ have identical characteristic polynomials for all $v\in V(W)$, (see \cite{GM}). Moreover, it can be shown that all distance-regular graphs are walk regular. In Section 6 we will be able to say a fair amount about cyclic two-eigenvalue covers of distance-regular graphs. Working instead with walk regularity we can say more.

\begin{theorem}\label{wreg1} If $Y$ is a walk regular graph and $X=Y^f$ is a cyclic 2ev-cover of $Y$ then $X$ is walk regular.\end{theorem}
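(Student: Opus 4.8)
The plan is to pass to the block-diagonalization of $A(X)=A(Y)^{\rho(f)}$ afforded by the irreducible representations of the cyclic group $\langle f\rangle$, and to track the diagonal of each power $A(X)^k$ block by block. Since $\langle f\rangle$ is cyclic (hence abelian), Theorem~\ref{rep1} gives $A(X)=A(Y)^{\rho(f)}$, and $\rho\simeq\sum_{i=0}^{r-1}\phi_i$ where each $\phi_i$ is a degree-$1$ representation sending the generator to an $r$th root of unity. Thus $A(X)$ is equivalent (by a fixed change of basis that respects the fiber structure) to $\bigoplus_i S_i$ where $S_0=A(Y)$ and $S_i=A(Y)^{\phi_i(f)}$. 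Walk regularity of $X$ is equivalent to $A(X)^k$ having constant diagonal for every $k$; I would first observe that, because the change of basis is block-diagonal over the index set $R$ (it acts within the decomposition $\mathbb{C}^{V\times R}\cong \bigoplus_i \mathbb{C}^V$), the diagonal of $A(X)^k$, restricted to the fiber over a vertex $v$, is governed by $\sum_i (S_i^k)_{vv}$ together with the unitary weights. The cleanest route is: $X$ is walk regular iff for every $k$ the quantity $(A(X)^k)_{(v,j),(v,j)}$ is independent of $(v,j)$; transitivity of $\langle f\rangle$ on $R$ already kills the dependence on $j$ (the cover is vertex-transitive on each fiber in the relevant sense), so it suffices to show $\sum_i (S_i^k)_{vv}$ is independent of $v$.

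Next I would use the 2ev hypothesis. By definition $\operatorname{Spec}(A(X))\setminus\operatorname{Spec}(A(Y))$ has exactly two elements $\theta,\tau$, so each non-trivial block $S_i$ ($i\ge 1$) has spectrum contained in $\{\theta,\tau\}$; by the Proposition, each such $S_i$ satisfies $S_i^2=\lambda S_i+\mu I$ with $\lambda=-\theta-\tau$, $\mu=-\theta\tau$, and these constants do not depend on $i$. Hence for each $k$ there are scalars $a_k,b_k$ (depending only on $k,\lambda,\mu$, not on $i$) with $S_i^k=a_k S_i+b_k I$ for all $i\ge 1$. Taking diagonal entries, $(S_i^k)_{vv}=a_k (S_i)_{vv}+b_k=b_k$ since every $S_i$ has zero diagonal ($Y$ is loopless). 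Therefore
\[
(A(X)^k)_{(v,j),(v,j)} \;=\; \tfrac1r\sum_{i=0}^{r-1}(S_i^k)_{vv}\cdot(\text{positive weights that sum to }1)\;=\;\tfrac1r\Big((A(Y)^k)_{vv} + (r-1)\,b_k\Big),
\]
up to making the averaging identity precise. Now $(A(Y)^k)_{vv}$ is independent of $v$ because $Y$ is walk regular, and $b_k$ is a global constant, so the whole expression is independent of $(v,j)$. That is exactly walk regularity of $X$.

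The main obstacle is making the "averaging" step rigorous: one must verify that the diagonal entry of $A(X)^k$ at $(v,j)$ really is the average (or an appropriate fixed linear combination with vertex-independent coefficients) of the diagonal entries $(S_i^k)_{vv}$. This is the content of how the regular representation decomposes the cover — it follows from the standard fact (as in Sections 7–9 of \cite{GH}) that the change-of-basis matrix intertwining $A(Y)^{\rho(f)}$ with $\bigoplus_i S_i$ is, fiberwise, the character table / DFT matrix of the cyclic group, whose entries have constant modulus $1/\sqrt r$; conjugating by it sends the $(v,j),(v,j)$ entry of a block-diagonal power to a uniform average over blocks of the $(v,v)$ entries. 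I would state this as a lemma (or cite it from \cite{GH}) and then the rest is the short computation above. A minor point to address carefully is that $S_0=A(Y)$ contributes the term $(A(Y)^k)_{vv}$, which is handled separately by the walk regularity of $Y$; every other block contributes the vertex-independent constant $b_k$, which is where the 2ev hypothesis does its work.
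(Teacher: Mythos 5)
Your proposal is correct and follows essentially the same route as the paper: decompose $A(X)$ into the blocks $S_i=A(Y)^{\phi_i(f)}$, use the quadratic minimal polynomial and zero diagonal to show each nontrivial $S_i^k$ has constant diagonal, handle $S_0^k=A(Y)^k$ by walk regularity of $Y$, and conclude via the averaging identity $\bigl(A(X)^k\bigr)_{(v,j),(v,j)}=\tfrac1r\sum_i (S_i^k)_{vv}$. The only cosmetic difference is that you justify this identity by conjugating with the constant-modulus DFT matrix, whereas the paper proves it by expanding $A(X)^k=\sum (A_{m_1}\cdots A_{m_k})\otimes\rho(\omega^{\sum m_i})$ and evaluating the resulting character sums over $\mathbb{Z}_r$ --- these are equivalent computations.
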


\begin{proof}
Let $\phi_0,\dots \phi_{r-1}$ be the irreducible representations of $\langle f \rangle$, with $\phi_0$ the trivial representation, and let $S_1=A(Y)^{\phi_1(f)}$.  
Let $A_g$ denote the 01 matrix \[(A_g)_{u,v}=\begin{cases} 1& \text{if } f(u,v)=g \\ 0& \text{otherwise}\end{cases}.\]

\noindent Identifying our index set with the (additive) cyclic group on $\{0,\dots,r-1\}$ we may write \[S_1=\sum_{i=0}^{r-1} \omega^iA_i\] where $r$ is the index of the cover, and $\omega$ is a primitive $r$th root of unity. By replacing $\phi_1$ with $\phi_{i}$ for $i\in \{2,\dots r-1\}$ we exchange $\omega$ for the other $r$th roots of unity and obtain matrices $S_j=A(Y)^{\phi_j(f)}$ defined by \[S_j=\sum_{i=0}^{r-1} \omega^{j\cdot i}A_i.\]

\noindent Let $\circ$ denote the entry-wise matrix product, and let $\ell$ be a non-negative integer.

\textbf{Claim.} \[A(X)^\ell\circ I_{nr}=(\frac{1}{r}(\sum_{j=0}^{r-1}S_j^\ell)\otimes I_r)\circ I_{nr}\]

This will complete our proof since each of the individual $S_j^\ell$ terms have constant diagonal:  $S_1$ has the same support as $A$, so $S_1$ has zero diagonal.  By Proposition 4.1, any power of $S_1$  is polynomial in $\{I,S_1\}$, hence $S_1^\ell$ has constant diagonal. As we have noted in Section 4, the matrices $S_j$ for $j\geq 1$ must be roots of the same minimal polynomial. So each $S_j$ is a root of the minimal polynomial of $S_1$, and $S_j^\ell$ has constant diagonal as well. By assumption $Y$ is walk regular, so $S_0^\ell=A(Y)^\ell$ has constant diagonal. Hence the diagonal of $A(X)^\ell$ is the diagonal of a sum of matrices with constant diagonal and we are done.

\textbf{Proof of Claim.} From Theorem \ref{rep1} we have \[A(X)=\sum_{i=0}^{r-1}A_i\otimes \rho(\omega^i).\] So $A(X)^\ell$ is a sum of products of the form \[(A_{m_1}A_{m_2}\dots A_{m_\ell})\otimes \rho( \omega^{\sum_k m_k})\] where $m_i\in \{0,\dots r-1\}.$  Since $\langle f \rangle$ is regular, the only summands that have non-zero diagonal entries are those for which the righthand tensor factor is the identity i.e., those terms where $\sum m_k=0 \mod r$. Let $\mc{M}$ be the set of $\ell$-tuples $(m_1,\dots m_\ell)$ of elements of $\langle f \rangle$, and let $\mc{M}_0$ the subset of $\mc{M}$ for which  $\sum m_k=0\mod r$. The previous remark shows that $A(X)^\ell$ has the same diagonal as 

\begin{equation} \sum_{M\in\mc{M}_0}\big(\prod_{m_i\in M}A_{m_i}\big)\otimes I_r=\Big(\sum_{M\in\mc{M}_0}\prod_{m_i\in M}A_{m_i}\Big)\otimes I_r\end{equation}

\noindent On the other hand, notice that \[S_j^\ell=(\sum_{i=0}^{r-1} \omega^{j\cdot i}A_i)^\ell=\sum_{M\in\mc{M}}\prod_{m_i\in M}\omega^{j\cdot m_i}A_{m_i}\]

\noindent Now, summing over all such $S_j$ we have \[\sum_{j=0}^{r-1} S_j^\ell=\sum_{j=0}^{r-1} \sum_{M\in\mc{M}}\prod_{m_i\in M}\omega^{j\cdot \sum m_i}A_{m_i} =\sum_{M\in\mc{M}}\sum_{j=0}^{r-1}\prod_{m_i\in M}\omega^{j\cdot \sum m_i}A_{m_i} \]

\noindent For any $M\in \mc{M}\backslash \mc{M}_0$ the $\omega^j$ term is independent of the product, hence \[\sum_{j=0}^{r-1}\prod_{m_i\in M}\omega^{j\cdot \sum m_i}A_{m_i}=\sum_{j=0}^{r-1}\omega^j\prod_{m_i\in M}\omega^{\sum m_i}A_{m_i}=0.\] And so \begin{equation}\sum_{j=0}^{r-1} S_j^\ell=\sum_{M\in\mc{M}_0}\sum_{j=0}^{r-1}\prod_{m_i\in M}\omega^{j\cdot \sum m_i}A_{m_i} =\sum_{M\in\mc{M}_0}\sum_{j=0}^{r-1}\prod_{m_i\in M}A_{m_i}=r\sum_{M\in\mc{M}_0}\prod_{m_i\in M}A_{m_i}.\end{equation}
Putting together equations $(1)$ and $(2)$ proves the claim.\qed
\end{proof}

\begin{corollary} If $Y$ is a walk regular graph and $X$ is an abelian 2ev-cover of $Y$ then $X$ is walk regular.\end{corollary}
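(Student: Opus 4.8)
The plan is to notice that the proof of Theorem~\ref{wreg1} uses the cyclic structure of $\langle f\rangle$ only superficially, and to rerun it with the $r$-th roots of unity replaced by the characters of $\langle f\rangle$. First I would invoke part~(d) of the structure theorem for finite groups: since $\langle f\rangle$ is abelian of order $r$ and (as we assume throughout for abelian covers) acts regularly on the fibres, all of its irreducible representations $\phi_0,\dots,\phi_{r-1}$ have degree $1$, with $\phi_0$ trivial, and $\rho\cong\sum_{j=0}^{r-1}\phi_j$. Writing $\langle f\rangle$ additively as in the proof of Theorem~\ref{wreg1} and setting $S_j=A(Y)^{\phi_j(f)}=\sum_{g}\phi_j(g)A_g$, Theorem~\ref{rep1} still gives $A(X)=\sum_{g}A_g\otimes\rho(g)$.

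The only ingredient of the earlier argument special to the cyclic case is the identity $\sum_{j=0}^{r-1}\omega^{jk}=r$ if $r\mid k$ and $0$ otherwise; its replacement is the column-orthogonality relation of the character table, namely $\sum_{j=0}^{r-1}\phi_j(g)=r$ if $g=0$ and $0$ otherwise. I would prove this in a line: if $g\neq0$ there is a character $\phi$ with $\phi(g)\neq1$, and $\phi_k\mapsto\phi\phi_k$ permutes $\{\phi_0,\dots,\phi_{r-1}\}$, so $\phi(g)\sum_k\phi_k(g)=\sum_k\phi_k(g)$ forces the sum to vanish. With this, the computation in the proof of Theorem~\ref{wreg1} goes through verbatim. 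Expanding $S_j^\ell=\sum_{M\in\mc{M}}\phi_j\!\big(\sum_{m_i\in M}m_i\big)\prod_{m_i\in M}A_{m_i}$ and summing over $j$ collapses, by orthogonality, to $\sum_{j}S_j^\ell=r\sum_{M\in\mc{M}_0}\prod_{m_i\in M}A_{m_i}$, while the diagonal of $A(X)^\ell=\sum_{M\in\mc{M}}\big(\prod_{m_i\in M}A_{m_i}\big)\otimes\rho\big(\sum_{m_i\in M}m_i\big)$ only sees the terms with $M\in\mc{M}_0$, since $\rho(h)$ is a fixed-point-free permutation matrix when $h\neq0$. Hence the same Claim holds: $A(X)^\ell\circ I_{nr}=\big(\tfrac1r(\sum_{j}S_j^\ell)\otimes I_r\big)\circ I_{nr}$.

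It remains to see that each $S_j^\ell$ has constant diagonal. For $j=0$ this is the walk regularity of $Y$. For $j\geq1$ I would use the 2ev hypothesis exactly as in Section~4: as a multiset, $\mathrm{Spec}(A(X))$ is the union of $\mathrm{Spec}(A(Y))$ with the sets $\mathrm{Spec}(S_j)$, $j\geq1$, so the 2ev condition forces every eigenvalue of every such $S_j$ to lie in $\{\theta,\tau\}$; thus $S_j$ satisfies $S_j^2=\lambda S_j+\mu I$, so $S_j^\ell$ lies in the span of $\{I,S_j\}$, and since $S_j$ has zero diagonal (it has the support of the loopless matrix $A(Y)$) its powers have constant diagonal. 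Summing the matrices on the right of the Claim, $A(X)^\ell$ has constant diagonal for all $\ell$, so $X$ is walk regular.

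I do not anticipate a real obstacle: the whole point is that ``primitive $r$-th root of unity'' can be upgraded to ``the full group of characters'' for free, and that the 2ev condition still pins a common quadratic on the nontrivial blocks. The only place wanting a little care is the standing regularity assumption on $\langle f\rangle$ (without which $A(X)=A(Y)^{\rho(f)}$, and hence the tensor expansion, may fail); degenerate cases such as some $S_j$ being zero are harmless, since a zero-diagonal matrix all of whose eigenvalues lie in a two-element set still obeys a common quadratic and has constant-diagonal powers. One could instead try to induct along a decomposition $\langle f\rangle\cong H\times K$ via part~(c) of the structure theorem, viewing $X$ as an $H$-cover of a $K$-cover of $Y$, but then one must argue separately that the intermediate cover is walk regular, so the direct argument above seems preferable.
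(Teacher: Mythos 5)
Your proposal is correct and follows essentially the same route as the paper: both rerun the proof of Theorem~\ref{wreg1} for an abelian $\langle f\rangle$, using that all irreducible representations have degree one and that $\rho(g)$ has zero diagonal for $g\neq id$ (the paper gets this from the tensor decomposition of the regular representation of a product, you get it from regularity of the action), so that only the $\mc{M}_0$ terms contribute to the diagonal. Your version is somewhat more explicit than the paper's, which defers to the cyclic computation; in particular you correctly identify and prove the character orthogonality relation $\sum_j\phi_j(g)=r\,[g=0]$ as the precise replacement for the root-of-unity sum.
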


\begin{proof}
The regular representation of the abelian group $G=H_1\times H_2$ is the tensor product of the regular representations of $H_1$ and $H_2$. Hence for each $g\in G$ we have $\rho(g)=\rho_{H_1}(h_1)\otimes \rho_{H_2}(h_2)$, and $\rho(g)$ still has zero diagonal unless $g=id$. So the only terms in the expansion of $(\sum_{g\in G} A_g\otimes \rho(g))^\ell$ that contribute to the diagonal are those whose first tensor factor has subscripts whose sum is zero mod $|G|$. The irreducible representations of $\rho$ are still of degree one and the argument  proceeds exactly as above. \qed
\end{proof}

\section{Covers of strongly regular graphs}
Let $X$ be a connected regular graph and $v\in V(X)$. The \textbf{distance partition} of $X$ with respect to  $v$ is a partition of $V(X)$ into cells $\{\Gamma_0(v),\Gamma_1(v),\dots,\Gamma_d(v)\}$ where $\Gamma_i(v)$ consists of the vertices at distance $i$ from $v$.  A partition $\pi=(\pi_1,\dots,\pi_k)$ of $V(X)$ is \textbf{equitable} if for each pair $(i,j)\in \{0,\dots d\}\times \{0,\dots d\}$ the number of edges of $X$ with one end in $\pi_i$ and the other end in $\pi_j$ depends only on $i$ and $j$.

A graph whose distance partition is equitable is \textbf{distance-regular}. Each distance-regular graph of diameter $d$ has an \textbf{intersection array} $\{b_0,b_1,\dots b_{d-1};c_1,c_2\dots c_d\}$ where $b_i$ is the number of neighbors $u\in \Gamma_{i+1}$ of any $v \in \Gamma_i$ and $c_i$ is the number of neighbors $u\in \Gamma_{i}$ of any $v \in \Gamma_{i+1}$. 

A distance-regular graph with diameter 2 is \textbf{strongly regular}. The \textbf{parameters} of a strongly regular graph are the 4-tuple $(n,k,a,c)$ where $n$ is the number of vertices, $k$ is the valency, $a:=k-b_1-c_1$ is the number of common neighbors of a pair of adjacent vertices and $c:=c_2$ is the number of common neighbors of a pair of non-adjacent vertices. See Chapter 10 of \cite{CG2} for a thorough introduction.

A graph $Y$ of diameter $d$ is \textbf{antipodal} if ``$v$ is at distance $d$ from $u$'' is an equivalence relation on $V(Y)$. If $X$ is antipodal and distance-regular, we consider the graph $Y$ whose vertices are the antipodal classes with two classes adjacent if there is an edge of $X$ with one end in each class. $Y$ is called the \textbf{antipodal quotient} of $X$, and $X$ is a cover of its antipodal quotient. See Theorem 2.1 of \cite{GH}, \cite{BCN}, \cite{AG}. In the case that this antipodal quotient $Y$ is the complete graph, $X$ is a distance-regular antipodal cover of $K_n$, or \textbf{drackn} for short.\label{drackn}

Distance-regular graphs are a well studied topic in algebraic graph theory. They are related to combinatorial designs, linear codes, association schemes, and orthogonal polynomials. The literature is vast, but a good starting place would be \cite{BCN},\cite{NB},\cite{CG2}. 

We take a quick detour to offer a few interesting examples.
\begin{enumerate}

\item The hypercube is distance-regular and antipodal with antipodal classes of size 2. Its antipodal quotient is also distance-regular. In the case of $Q_3$ this antipodal quotient is the complete graph $K_4$, and $Q_3$ is a drackn. 
\item The Hoffman-Singleton graph $H$ is strongly regular with parameters (50,7,0,1), For any fixed vertex $v\in V(H)$ the subgraph induced by vertices at distance 2 from $v$ is a 6-fold distance-regular antipodal cover of $K_7$.
\item The Johnson graph $J(n,k)$ has as its vertices the $k$ element subsets of $\{1,\dots, n\}$. Two vertices are adjacent if  their intersection has size $k-1$. Any Johnson graph is distance-regular.
\end{enumerate}

Now we will give a characterization of when a cyclic 2ev-cover of a strongly regular graph is distance-regular. 

Our key lemma shows that the 2ev condition forces the arc function to take a very particular form.

\begin{lemma}\label{ML}
Let $Y$ be a connected distance-regular graph with parameters $a:=k-b_1-1$ and $c:=c_2$. Let $\phi$ a non-trivial irreducible representation of $\langle f \rangle$, $S=A(Y)^{\phi(f)}$, and $x^2-\lambda x-\mu$ the minimal polynomial of $S$. Assume that $f$ is normalized so that $S$ is of the form \[S=\pmat{0&\bb{1}&\bb{0}&\dots\\\bb{1}^T&N_1 &B\\ \bb{0}^T&B^T & N_2&\\\vdots& &&\ddots}.\] There exist constants \[t=\frac{a-\lambda}{r},~~s=\frac{c}{r}\] so that \begin{itemize}\item For each $i\in \{1,\dots, r-1\}$, each column of the submatrix $N_1$ contains exactly $t$ entries equal to $\omega^i$. \item For each $j\in \{0,\dots, r-1\}$ each column of the submatrix $B$ contains exactly $s$ entries equal to $\omega^j$.\end{itemize}
\end{lemma}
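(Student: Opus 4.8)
The plan is to work with all the matrices $S_d := A(Y)^{\phi^d(f)}$ simultaneously, for $d = 0,1,\dots,r-1$, where I fix a generator $\phi = \phi_1$ of the (cyclic) character group. Since every entry of $S = S_1$ is an $r$th root of unity or $0$, we have $S_d = S_1^{\circ d}$, so each $S_d$ is again Hermitian and still has the displayed block form with the same all-ones block $\mathbb{1}$ from the base vertex $v$ to $\Gamma_1(v)$ (because $1^d = 1$ and $0^d = 0$). The external input I would invoke is the 2ev hypothesis, which — as remarked in Section 4 — forces every $S_d$ with $1 \le d \le r-1$ to have minimal polynomial $x^2 - \lambda x - \mu$; in particular $S_d^2 = \lambda S_d + \mu I$ for all such $d$, while $S_0 = A(Y)$.

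The first step is to read off column sums of the blocks $N_1$ and $B$ of each $S_d$ from the row of $S_d^2 = \lambda S_d + \mu I$ indexed by $v$. That row of $S_d$ is the indicator (row) vector $\mathbb{1}_{\Gamma_1(v)}^T$, so the identity becomes $\mathbb{1}_{\Gamma_1(v)}^T S_d = \lambda\, \mathbb{1}_{\Gamma_1(v)}^T + \mu\, e_v^T$. Evaluating coordinate $w$: for $w \in \Gamma_1(v)$ this gives $\sum_{u \in \Gamma_1(v)} (S_d)_{uw} = \lambda$ when $1 \le d \le r-1$, while for $d = 0$ the same sum is $|\Gamma_1(v) \cap \Gamma_1(w)| = a$; for $w \in \Gamma_2(v)$ it gives $\sum_{u \in \Gamma_1(v)} (S_d)_{uw} = 0$ when $1 \le d \le r-1$, and $|\Gamma_1(v) \cap \Gamma_1(w)| = c$ when $d = 0$. (For $w \in \Gamma_i(v)$ with $i \ge 3$ the sums all vanish, consistent with $B$ being the only off-diagonal block of $S$ that meets $\Gamma_1(v)$; the coordinate $w = v$ just recovers $\mu = k$.) This is the only place distance-regularity is used, and only through the values $|\Gamma_1(v)\cap\Gamma_1(w)| \in \{a, c\}$ according as $w$ is adjacent to, or at distance two from, $v$.

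The second step is to convert these column sums into the claimed multiplicities by a discrete Fourier inversion. Fix a column $w$ of $N_1$, i.e.\ $w \in \Gamma_1(v)$; its nonzero entries are the $a$ roots of unity $(S_1)_{uw}$ for $u \in \Gamma_1(v) \cap \Gamma_1(w)$, and the computation above says precisely that the $d$th power sum of this multiset is $\sum_u (S_1)_{uw}^d = \sum_u (S_d)_{uw}$, which equals $a$ for $d=0$ and $\lambda$ for $d = 1,\dots,r-1$. Since the power sums for $d = 0,\dots,r-1$ determine the multiplicity function $j \mapsto n_j := \#\{u : (S_1)_{uw} = \omega^j\}$ via inverse DFT, $n_j = \tfrac1r\bigl(a + \lambda\sum_{d=1}^{r-1}\omega^{-jd}\bigr) = \tfrac{a-\lambda}{r}$ for every $j \ne 0$, which is the statement for $N_1$ with $t = (a-\lambda)/r$. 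The argument for $B$ is identical: a column $w \in \Gamma_2(v)$ has $c$ nonzero entries whose $d$th power sum is $c$ for $d = 0$ and $0$ for $1 \le d \le r-1$, so the same inversion yields $n_j = c/r$ for every $j \in \{0,\dots,r-1\}$, giving $s = c/r$. Integrality of $t$ and $s$ (hence $r \mid c$ and $\lambda \equiv a \pmod r$) then comes for free, since $n_j$ counts entries.

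The step I expect to require the most care is the claim that \emph{all} of $S_1,\dots,S_{r-1}$ share the minimal polynomial $x^2 - \lambda x - \mu$: this is exactly where the global 2ev hypothesis is needed rather than merely ``$S$ has two eigenvalues,'' and it should be noted explicitly that each $\phi^d$ with $1 \le d \le r-1$ is a non-trivial irreducible representation of $\langle f \rangle$ even when $\gcd(d,r) > 1$, so the corresponding $S_d$ is non-scalar with zero diagonal and hence has exactly the two eigenvalues $\theta, \tau$. Everything else is the bookkeeping of the distance partition together with elementary Fourier inversion.
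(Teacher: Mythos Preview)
Your argument is correct, and it proceeds along a genuinely different line from the paper's. The paper works with the single matrix $S$ and, from $C_m(N_1)=\sum_i t_i\omega^i=\lambda$, invokes a Galois-type symmetry (``any permutation of the $r$th roots of unity which fixes the rationals'') to conclude $t_1=\cdots=t_{r-1}$ directly. You instead bring in the full family $S_0,\dots,S_{r-1}$, use the global 2ev hypothesis to get $S_d^2=\lambda S_d+\mu I$ for every $d\neq 0$, and then invert the resulting discrete Fourier system $\sum_j n_j\omega^{jd}=\lambda$ ($d\neq 0$), $\sum_j n_j=a$.

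What each approach buys: the paper's route is more self-contained in that it ostensibly uses only the single quadratic relation for $S$. Your route makes the dependence on the 2ev hypothesis explicit, and in doing so it sidesteps a genuine subtlety in the symmetry argument. For composite $r$, the reality (even rationality) of $\sum_i t_i\omega^i$ does not by itself force $t_1=\cdots=t_{r-1}$; for instance when $r=4$ one only gets $t_1=t_3$, with $t_2$ unconstrained. Your Fourier inversion uses the equations for \emph{all} $d=1,\dots,r-1$ (including $d$ with $\gcd(d,r)>1$), which is exactly the extra input supplied by the 2ev assumption and exactly what is needed to pin down every $n_j$. Your closing remark that this step ``requires the most care'' is on point: the lemma as stated does not literally include the 2ev hypothesis, so you are proving the version that is actually used downstream in the paper rather than the version that is written.
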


\begin{proof}
For a submatrix $M$ of $S$, let $C_m(M)$ denote the sum of the entires of column $m$ of $M$. Matrix multiplication shows that the first row of $S^2$ is \[\pmat{k&C_1(N_1)&\dots& C_k(N_1)&C_1(B)&\dots&C_{\ell}(B)&0&\dots&0}.\] 
On the other hand, $S^2 = \lambda S + \mu I$ so the first row of $S^2$ is \[\pmat{\mu& \lambda &\dots& \lambda& 0 &\dots& 0}\]

We have three immediate consequences. \begin{itemize}
\item $\mu=k$. 
\item For each $1\leq m\leq k, C_m(N_1)=\lambda$. 
\item For each  $1\leq n \leq \ell$, $C_n(B)=0.$
\end{itemize}

For some fixed column $m'$ of $N_1$, let $t_i$ denote the number of occurrences of $\omega^i$ in that column. We may write $C_m'(N_1)=\sum_{i=0}^{r-1}t_i\omega^i$. Note that $S$ is hermitian and that $\lambda$ is (minus) the sum of the two distinct eigenvalues of $S$, hence $\lambda$ is real. So the above expression for $\lambda$ must be symmetric under any permutation of the $rth$ roots of unity which fixes the rationals. It follows that $t_1=t_2=\dots =t_{r-1}$. Denote this common value by $t$. We have \[\lambda=C_{m'}(N_1)=t_0+t\Big(\sum_{i=1}^{r-1}\omega^i\Big)=t_0-t.\] Since $Y$ is distance-regular, we have $\sum_{i=0}^{r-1}t_i=a$ which we may now write as \[t_0+t(r-1)=a.\] Combining our equations we obtain \[t=\frac{a-\lambda}{r}.\] Note that $t$ is independent of the chosen column $m'$, this proves the first claim.

For some fixed column $n'$ of $B$, let $s_j$ denote the number of occurrences of $\omega^j$ in that column. We play the same game with the equations \[C_{n'}(B)=\sum_{j=0}^{r-1}s_j\omega^j=0,~~~~~\sum_{j=0}^{r-1}s_j=c\] and find that \[s:=s_0=s_1=\dots=s_{r-1}=\frac{c}{r}.\] is independent of $n'$. This proves the second claim. \qed
\end{proof}

Now we will employ this lemma to characterize cyclic 2ev-covers of strongly regular graphs. We begin with the degenerate case.

\begin{theorem}\label{drg1}
If $Y$ is the complete graph on $n$ vertices and $Y^f$ is a connected cyclic 2ev-cover of $Y$, then $Y^f$ is a drackn.
\end{theorem}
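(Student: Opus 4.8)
The plan is to use Lemma~\ref{ML} to pin down the combinatorial structure of the cover $Y^f$ and then invoke the known characterization of drackns. First I would set up notation: let $Y=K_n$, so $Y$ is distance-regular of diameter $2$ with valency $k=n-1$, intersection parameters $a = n-2$ and $c = n-1$, and $b_1 = 0$. Let $\phi$ be any non-trivial irreducible representation of the cyclic group $\langle f\rangle$ (of order $r$), and set $S = A(Y)^{\phi(f)}$; since $Y^f$ is a 2ev-cover, $S$ has exactly two distinct eigenvalues $\theta,\tau$ with $S^2 = \lambda S + \mu I$, and by Proposition~4.1, $\mu = k = n-1$. Normalize $f$ so that $S$ has the block form of Lemma~\ref{ML}; because $K_n$ has diameter $2$ the only blocks are $N_1$ (on the neighbours of the base vertex) and there is no $B$ block beyond it --- in fact every vertex is within distance $2$, so the matrix is $2\times 2$ in blocks and $N_1$ is supported on the whole complement of the first row/column. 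Lemma~\ref{ML} then gives $t = (a-\lambda)/r = (n-2-\lambda)/r$ occurrences of each non-identity root of unity in each relevant column, and $s = c/r = (n-1)/r$; in particular $r \mid n-1$.

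The next step is to translate these conditions into the defining combinatorial data of an antipodal cover of $K_n$. The fibers of $Y^f$ partition $V(Y^f)$ into $n$ cocliques of size $r$, and between any two fibers there is a perfect matching (as recalled in Section~2, since $K_n$ is complete every pair of fibers is joined). So $Y^f$ is already an $r$-fold cover of $K_n$ with the right local structure of an antipodal cover; what remains is to verify that $Y^f$ is distance-regular with diameter $3$ and that the distance-$3$ relation is exactly ``same fiber.'' The key point is that the uniform distribution of roots of unity in the columns of $S$, for \emph{every} non-trivial $\phi_j$ simultaneously, forces the number of common neighbours of two vertices in the same fiber, and of two vertices in distinct fibers, to be constant. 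Concretely I would compute $(A(Y^f)^2)$ entrywise using $A(Y^f) = \sum_i A_i \otimes \rho(\omega^i)$ and the fact that, by Lemma~\ref{ML} applied to all $r-1$ non-trivial representations, the matrices $A_i$ restricted between fibers have constant column sums; this yields that two vertices in different fibers have a constant number $c$ of common neighbours and two vertices in the same fiber (other than equal) have a constant number of common neighbours, which is precisely the intersection-array condition for an antipodal distance-regular cover of $K_n$. Equivalently, one can cite the standard fact (Godsil--Hensel \cite{GH}, Theorem 2.1, or \cite{BCN}) that an $r$-fold antipodal cover of $K_n$ in which the parameter $c_2$ is constant is a drackn, and the Lemma supplies that constancy.

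The main obstacle I anticipate is establishing distance-regularity in full, rather than just the local matching structure: having constant $c_2$ and the cover structure is not automatically enough unless one also controls $a_1$ (the number of common neighbours of adjacent vertices) and verifies diameter exactly $3$. I would handle $a_1$ the same way, reading it off from the $t$-regularity of the $N_1$ block given by Lemma~\ref{ML}: two adjacent vertices $(u,i)\sim(v,j)$ have common neighbours only in fibers other than those of $u$ and $v$, and the count is governed by the column distribution of roots of unity, which the Lemma makes independent of the chosen vertices. For the diameter, connectedness of $Y^f$ (hypothesis) together with the fact that any two vertices in distinct fibers are adjacent forces that two vertices in the same fiber are at distance $2$ or $3$; ruling out distance $2$ for at least some pairs, and then showing the distance-$3$ relation is all of ``same fiber minus diagonal,'' gives diameter $3$ and antipodality. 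Once distance-regularity, antipodality, and the fact that the quotient is $K_n$ are in hand, $Y^f$ is a drackn by definition (Section~\ref{drackn}), completing the proof.
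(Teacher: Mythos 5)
Your overall strategy is the same as the paper's: apply Lemma~\ref{ML} to get constant common-neighbour counts and then invoke the Godsil--Hensel characterization of drackns (the paper cites Lemma 3.1 of \cite{GH}; you cite Theorem 2.1, but the idea is identical). However, there are concrete gaps. The most serious is that you never use the connectedness hypothesis, and in particular never establish $t>0$. Everything downstream depends on this: if $t=0$ then $N_1$ is a $0$-$1$ matrix and $Y^f$ is a disjoint union of $r$ copies of $K_n$, and your claim that the vertices $(v_j,q)$ with $q\neq 0$ lie at distance $2$ from the base vertex fails outright. The paper opens its proof with exactly this observation ($t>0$ since $t=0$ forces disconnection), and the rest of the argument --- distance $2$ to every vertex outside the base fiber, exactly $t$ common neighbours for non-adjacent vertices in distinct fibers --- flows from it. Relatedly, your diameter argument leans on the assertion that ``any two vertices in distinct fibers are adjacent,'' which is false: between two fibers there is only a perfect matching. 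The paper instead bounds the diameter by $3$ cleanly from the fact that $Y^f$ has at most four distinct eigenvalues, and gets diameter exactly $3$ because same-fiber vertices are non-adjacent and (by the matching structure) have no common neighbours; with $t>0$ in hand one can alternatively deduce the bound from the distance-$2$ claim, but not from connectedness alone.

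A second, more minor issue is that your setup for $K_n$ is garbled: $K_n$ has diameter $1$, not $2$, so in Lemma~\ref{ML} there is no $B$ block, no parameter $c=c_2$, and no constant $s$; the conclusion $r\mid n-1$ you draw from this is both unsupported and false in general (the icosahedron is a connected cyclic $2$-fold 2ev-cover of $K_6$, and $2\nmid 5$). This does not propagate because you never use it, but it signals that the block structure of $S$ for the complete graph has not been pinned down correctly. Finally, your worry about also controlling $a_1$ is unnecessary for the route both you and the paper take: the Godsil--Hensel lemma only requires the cover structure together with a constant number of common neighbours for non-adjacent vertices in distinct fibers, which is exactly what the $t$-regularity of the columns of $N_1$ supplies.
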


\begin{proof}
Note that $t>0$, since $t=0$ implies $Y^f$ is the disjoint union of $r$ copies of $K_n$.

Label the vertices of $Y$ as $v_0,\dots, v_{n-1}$, and let $\gamma$ be the covering map from $Y^f$ to $Y$. For each fiber $\gamma^{-1}(v_i)$  let $(v_i,0),\dots, (v_i,r-1)$ denote the vertices of that fiber. Let $v_0$ be the vertex of $Y$ indexing the first row and column of $S$, and choose (arbitrarily) some vertex of $\gamma^{-1}(v_0)$ to be $(v_0,0)$. We consider the distance partition of $Y^f$ with respect to $(v_0,0)$.
 
$Y^f$ has at most 4 distinct eigenvalues ($n-1$, $-1$, and the eigenvalues of $S$), hence it has diameter at most $3$. Any two vertices of $Y^f$ from the same fiber are not adjacent and do not have any common neighbors, hence the diameter of $Y^f$ is exactly 3. The assumption that $S$ is normalized along the first row and column is equivalent to the claim that \[\Gamma_{1}(v_0,0)=\{(v_i,0):i\in \{1,\dots n-1\}\}.\] 

For any $j\neq 0$ and  $(v_j,q)$, the result of Lemma \ref{ML} and the fact that $t>0$ imply that, in the  $j$th column of $S$ there is some entry $S_{i,j}=\omega^q, i\neq 0$. Hence $(v_j,q)\sim(v_i,0)$ and the distance between $(v_0,0)$ and $(v_j,q)$ is at most (in fact, exactly) 2.  It follows that each vertex that is neither a neighbor of $(v_0,0)$ nor in $\gamma^{-1}(v_0)$ is at distance two from $(v_0,0)$. Moreover for any such vertex  $(v_j,q)$ there are exactly $t$ entries of the $j$th column equal to $\omega^q$. So $(v_0,0)$ and $(v_j,q)$ have exactly $t$ common neighbors. It now follows from Lemma 3.1 in \cite{GH} that $X$ is an $(n,r,t)$-drackn.  \qed
\end{proof}

Now we consider the case where $Y$ is strongly regular. As illustrated by Example 7.1, it is not true that all cyclic 2ev-covers of strongly regular graphs are distance-regular, but we can give an exact spectral characterization of those which are.

\begin{theorem}\label{drg2} Let $Y$ be a connected strongly regular graph with parameters $(n,k,a,c)$, $\phi$ a non-trivial irreducible representation of $\langle f \rangle$, $S=A(Y)^{\phi(f)}$, and $x^2-\lambda x-\mu$ the minimal polynomial of $S$. Then $Y^f$ is distance-regular if and only if $a=\lambda$.
\end{theorem}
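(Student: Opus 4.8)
The plan is to analyze the distance partition of $Y^f$ with respect to a fixed vertex $(v_0,0)$ and show that the condition $a=\lambda$ is precisely what makes this partition equitable. First I would record the spectral setup: $Y^f$ has at most five distinct eigenvalues, namely $k$, the two restricted eigenvalues $\theta_1,\theta_2$ of the strongly regular $Y$, and the two eigenvalues of $S$. Hence $Y^f$ has diameter at most $4$. Since two vertices in a common fiber are non-adjacent with no common neighbour (the fibers are cocliques and the subgraph on two fibers is a perfect matching or a coclique), the antipodal-type structure forces the diameter to be exactly $4$ when $c>0$, with the distance partition having cells $\Gamma_0=\{(v_0,0)\}$, $\Gamma_1=$ neighbours of $(v_0,0)$ (which, after normalizing $S$ along the first row/column, is exactly $\{(v_i,0): v_i\sim_Y v_0\}$), $\Gamma_4=\gamma^{-1}(v_0)\setminus\{(v_0,0)\}$, and then $\Gamma_2,\Gamma_3$ consisting of the remaining vertices lying over $\Gamma_1^Y(v_0)$ and $\Gamma_2^Y(v_0)$ respectively.

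Next I would use Lemma \ref{ML} to count edges between cells. The key point is that the submatrix $B$ (edges from fiber of $v_0$ to fibers over vertices at distance $2$ in $Y$) has each root of unity appearing exactly $s=c/r$ times per column, which immediately gives that every vertex over $\Gamma_2^Y(v_0)$ has exactly $c/r$ neighbours in $\Gamma_1 = \{(v_i,0)\}$ — this is uniform regardless of $\lambda$. The submatrix $N_1$ (edges among fibers over $\Gamma_1^Y(v_0)$) is where the condition enters: a vertex $(v_i,q)$ with $v_i\sim_Y v_0$ and $q\neq 0$ has, by the lemma, exactly $t=(a-\lambda)/r$ neighbours in $\Gamma_1$ when $q\neq 0$, and a vertex $(v_i,0)\in\Gamma_1$ has $a - (r-1)t$ neighbours within... wait — more carefully, I would track for each vertex in each cell the number of its neighbours in $\Gamma_0,\Gamma_1,\Gamma_2,\Gamma_3,\Gamma_4$ and show all these numbers are cell-constants exactly when $t$ has the ``right'' value. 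The cleanest route: show that the numbers $c_i,b_i$ along the distance partition are forced once we know $t$, and that equitability of the partition at the boundary between $\Gamma_1$ and the over-$\Gamma_1^Y$ vertices requires the over-$\Gamma_1^Y$ vertices to split correctly into $\Gamma_1$ (those with index $0$) versus $\Gamma_2$ — and the count works out to be uniform precisely when $a=\lambda$, i.e. $t=0$, analogously to how $t=0$ in Theorem \ref{drg1} makes the cover trivial but here instead makes it distance-regular with the right diameter.

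For the converse direction, if $Y^f$ is distance-regular then its distance partition with respect to $(v_0,0)$ is equitable; comparing the edge-counts between the cell of neighbours-in-the-same-fiber-layer and applying Lemma \ref{ML} forces the parameter $t=(a-\lambda)/r$ to take the unique value consistent with a distance-regular intersection array, and solving that constraint yields $a=\lambda$. I would phrase this by computing $b_1$ (number of $\Gamma_2$-neighbours of a $\Gamma_1$ vertex) two ways and deriving a contradiction unless $t=0$.

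The main obstacle I anticipate is the bookkeeping at distance $2$: a vertex $(v_i,q)$ with $v_i\sim_Y v_0$ but $q\neq 0$ is at distance $2$ from $(v_0,0)$, and its neighbours in $\Gamma_1$ number exactly $t$ per the lemma, but I also need to control its neighbours in $\Gamma_2$ and $\Gamma_3$ and verify these are cell-constants — this requires a second application of Lemma \ref{ML} to the other blocks of $S$ (the interaction between $N_1$, $B$, and $N_2$) and a careful argument that the strong regularity of $Y$ (constant $a$ and $c$) together with the uniform root-of-unity distribution propagates to constant edge counts between all pairs of cells. Making this propagation rigorous — rather than just checking the ``obvious'' cells $\Gamma_0,\Gamma_1,\Gamma_4$ — is the crux, and I expect it to hinge on showing that $t=0$ collapses the problematic distinction between index-$0$ and index-$\neq 0$ vertices over $\Gamma_1^Y(v_0)$, so that the distance partition of $Y^f$ refines cleanly over the distance partition of $Y$.
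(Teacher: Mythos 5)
Your overall strategy (analyze the distance partition of $Y^f$ at $(v_0,0)$ using Lemma \ref{ML}, with the dichotomy $t=0$ versus $t>0$) is the same as the paper's, but the sketch contains a structural error that propagates into both directions. You assert that the diameter is always exactly $4$ and that $\Gamma_2$ consists of the remaining vertices over $\Gamma_1^Y(v_0)$ while $\Gamma_3$ consists of those over $\Gamma_2^Y(v_0)$. This is backwards, and moreover only the $t=0$ case has diameter $4$. Since $s=c/r\geq 1$, \emph{every} vertex over $\Gamma_2^Y(v_0)$ is adjacent to some $(v_\alpha,0)\in\Gamma_1$ and hence lies in $\Gamma_2$ regardless of $\lambda$; when $t=0$ the vertices $(v_\alpha,q)$ with $v_\alpha\sim_Y v_0$ and $q\neq 0$ form $\Gamma_3$ (they are joined to $\Gamma_1$ only through $\Gamma_2$, because all gains on $N_1$ and on the first row are trivial), and $\gamma^{-1}(v_0)\setminus\{(v_0,0)\}$ is $\Gamma_4$. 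When $t>0$ those same vertices $(v_\alpha,q)$ drop into $\Gamma_2$, the fiber of $v_0$ becomes $\Gamma_3$, and the diameter is $3$. Getting these cells right is not mere bookkeeping: it is exactly what makes the forward direction work (the paper then reads off the intersection array $\{k,k-a-1,c(r-1)/r,1;1,c/r,k-a-1,k\}$) and what you yourself flag as ``the crux'' without carrying out.

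The converse direction as you propose it would fail. Computing $b_1$ two ways gives no contradiction: a vertex $(v_\alpha,0)\in\Gamma_1$ has $a-(r-1)t$ neighbours in $\Gamma_1$ and hence $k-1-a+(r-1)t$ neighbours in $\Gamma_2$, and both counts are cell-constants for \emph{any} value of $t$. Even the count $c_2$ need not produce a contradiction, since it could happen that $t=c/r$. The obstruction the paper uses lives one level deeper: when $t>0$, the cell $\Gamma_2$ contains both the vertices $(v_\alpha,q)$, $q\neq 0$, over neighbours of $v_0$ (each adjacent to $(v_0,q)\in\Gamma_3$) and all vertices over $\Gamma_2^Y(v_0)$ (which have no neighbours in $\gamma^{-1}(v_0)=\Gamma_0\cup\Gamma_3$), so the number of $\Gamma_3$-neighbours is not constant on $\Gamma_2$ and the partition is not equitable. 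You would need to replace your $b_1$ argument with this $\Gamma_2$--$\Gamma_3$ edge count for the converse to go through.
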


\begin{proof}
Suppose $a=\lambda$. Then $t=0$ and  $N_1$ is a 0-1 matrix. We construct the distance partition of $Y^f$ with respect to $(v_0,0)$ and find that the neighborhood of $(v_0,0)$ induces a subgraph isomorphic to $Y[N_1]$, call this subgraph $H_0$, and note that, since the arc function is trivial on $N_1$, there are $r-1$ other subgraphs $H_1,\dots H_{r-1}$ contained in $y^f$, each of which is the neighborhood of some $(v_0,j)$ for $j\in \{1,\dots r-1\}$. Moreover, there are no edges between any of the distinct $H_i$ (again, since the arc function is trivial on $N_1$). 

Since $Y$ is connected, we have $c>0$. By Lemma \ref{ML}, each column of $B$ contains $\frac{c}{r}$ entries equal to each $\omega^i$. Hence each vertex $(v_j,i)$ for $v_j\in N_2$ is adjacent to $\frac{c}{r}$ vertices of each $H_i$. This shows that  $\Gamma_2(v_0,0)$ is the union of fibers of vertices in $N_2$, and $\Gamma_3(v_0,0)$ is the union of $H_i$ for $i>0$. Finally,  $\Gamma_4(v_0,0)$ is the remaining $r-1$ vertices of the $v_0$ fiber. It follows that the distance partition is equitable, with intersection array \[\{k,k-a-1,\frac{c(r-1)}{r},1;1,\frac{c}{r},k-a-1,k\}.\]

Now suppose $a\neq\lambda$. So $t>0$ and each column of $N_1$ other than the first contains each $\omega^i$ for $i\in \{0,\dots, r-1\}$. Since $Y$ is connected, $c>0$, and each column of $B$ also contains at least one entry equal to each $\omega^i$.

As in the proof of Theorem \ref{drg1}, this means that when we construct the distance partition of $Y^f$ with respect to $(v_0,0)$, we have \[\Gamma_1(v_0,0)=\{(v_j,0) : j\in \{1,\dots, n-1\}\} \]  and \[\Gamma_3(v_0,0) \supseteq \{(v_0,j): j \in \{1,\dots r-1\}\}.\] 
There are two types of vertices remaining. Those of the form $(v_\alpha,i)$ for $v_\alpha$ a neighbor of $v_0$ and those of the form $(v_\beta,i)$ for $v_\beta$ at distance 2 from $v_0$.  By Lemma \ref{ML}, each $v_\alpha$ is adjacent to $t>0$ neighbors of $v_0$ for which the edge joining them is assigned the value $\omega^j$. Since this holds for all $j\in\{1,\dots, r-1\}$ each $(v_\alpha,i)$ for $i>0$ is at distance 2 from $(v_0,0)$. Similarly, by Lemma \ref{ML} all vertices of the fiber $\gamma^{-1}(v_\beta)$ are at distance two from $(v_0,0)$, and none of these vertices are incident with any vertex of $\gamma^{-1}(v_0)$, so some vertices in $\Gamma_2$ have neighbors in $\Gamma_3$ while others do not, and $X$ is not distance-regular.\qed\end{proof}

As a special case of the previous theorem, we show that every cyclic 2-ev cover of a complete bipartite graph is distance-regular.

\begin{corollary}
Suppose $Y$ is complete bipartite and $Y^f$ is a cyclic $r$-fold 2-ev cover of $Y$. Then $Y=K_{n,n}$ with $r|n$, and $X$ is bipartite distance-regular with diameter 4. 
\end{corollary}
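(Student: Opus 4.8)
The plan is to reduce everything to Theorem~\ref{drg2}. First I would note that since $Y^f$ is a $2$ev-cover and $\langle f\rangle$ is cyclic of order $r\geq 2$, every non-trivial irreducible $\phi_i$ gives a hermitian matrix $S_i=A(Y)^{\phi_i(f)}$ which has exactly the support of $A(Y)$ (so is non-scalar, hence has at least two eigenvalues), while all of its eigenvalues lie among the two eigenvalues of $A(Y^f)$ not inherited from $A(Y)$; hence each $S_i$ has exactly two distinct eigenvalues. Proposition~4.1 then forces $Y$ to be regular, and a regular complete bipartite graph must be $K_{n,n}$. This is a strongly regular graph on $2n$ vertices with valency $k=n$ and, in the notation of Theorem~\ref{drg2}, with $a=0$ and $c=n$, which places us squarely in that theorem's setting. (I will take $n\geq 2$ and $Y^f$ connected, as elsewhere in this section; the only $r$-fold covers of $K_2$ are disjoint unions of copies of $K_2$, which are not $2$ev-covers.)

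The single point requiring an idea is to check that $\lambda=0$, so that the hypothesis $a=\lambda$ of Theorem~\ref{drg2} holds automatically. Ordering the vertices of $K_{n,n}$ by colour class puts $S=A(Y)^{\phi(f)}$ in the block form $S=\bmat{0&C\\ C^{*}&0}$, and conjugation by $D=\mathrm{diag}(I_n,-I_n)$ sends $S$ to $-S$; hence $\mathrm{Spec}(S)=-\mathrm{Spec}(S)$. Since $S$ has exactly two eigenvalues, they must be $\theta$ and $-\theta$, so its minimal polynomial $x^{2}-\lambda x-\mu$ equals $x^{2}-\theta^{2}$, i.e.\ $\lambda=0=a$. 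Theorem~\ref{drg2} then yields that $Y^f$ is distance-regular.

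Finally I would extract the remaining claims from the proof of Theorem~\ref{drg2}. Substituting $k=c=n$ and $a=0$ into the intersection array found there produces $\{n,\,n-1,\,\tfrac{n(r-1)}{r},\,1;\,1,\,\tfrac{n}{r},\,n-1,\,n\}$, so $Y^f$ is distance-regular of diameter $4$. The constant $s=c/r=n/r$ of Lemma~\ref{ML} counts the entries of a column of the block $B$ equal to a fixed root of unity, hence is a non-negative integer, and it is positive since $c=n>0$; therefore $r\mid n$. That $X=Y^f$ is bipartite can be seen either by lifting a proper $2$-colouring of $Y$ along the fibres, or by observing that every $a_i=b_i+c_i-k$ in the array above vanishes.

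The main (indeed essentially the only) obstacle is the middle step: recognizing that the bipartiteness of $Y$ makes $S$ similar to $-S$, which pins the pair of eigenvalues of $S$ to be negatives of one another and so forces $\lambda=a$. After that the corollary is just Theorem~\ref{drg2} together with a routine substitution into its intersection array, the one mild subtlety being the need to set aside the degenerate small cases noted above.
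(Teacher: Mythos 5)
Your proposal is correct and follows essentially the same route as the paper: Proposition 4.1 forces $Y=K_{n,n}$, the key point is that $a=\lambda=0$ so Theorem \ref{drg2} applies, and $r\mid n$ falls out of the integrality of $s=c/r$ in Lemma \ref{ML}. The only local differences are that you obtain $\lambda=0$ from the similarity $S\mapsto -S$ under conjugation by $\mathrm{diag}(I_n,-I_n)$ (the paper instead reads $\lambda$ off as the column sums of the zero block $N_1$), and you offer the lifted $2$-colouring or the vanishing of the $a_i$ for bipartiteness where the paper uses spectral symmetry about $0$; all of these are valid and interchangeable.
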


\textbf{Proof}. It follows from Proposition 4.1  that $Y$ is regular. 
 Hence $Y$ is strongly regular with parameters $(2n,n,0,n)$ for some $n$. Normalizing as in Theorem \ref{drg2} and noting that $a=0$ we have 
\[S=\pmat{0&\bb{1}&\bb{0}\\\bb{1}^T&\bb{0}_{n\times n} &B\\ \bb{0}^T&B^T &N_2}\]

Hence $\lambda=0$ as well, and $Y^f$ is distance-regular. Moreover, each column of $B$ contains $\frac{c}{r}=\frac{n}{r}$ entries equal to $\omega^i$, hence $r|n$. It is well known that a graph is bipartite if and only if its adjacency spectrum is symmetric about 0. $Y$ is bipartite, and $S$ has spectrum symmetric around zero. Hence $Y^f$ has spectrum symmetric around zero and is bipartite.\qed

\section{Examples}
In this section we provide a few examples of 2ev-covers. 

\subsection{The Cohen-Tits cover of $Q_n$}\label{cttohh} We wish to construct a 2-fold cover of $Q_n$ which contains no 4-cycles. We employ the inductive construction of $Q_n$ as two copies of $Q_{n-1}$ joined by a perfect matching. 
Consider a $\pm1$ signing of the edges of $Q_{n-1}$, or equivalently, a symmetric arc function \[f:V\times V \rightarrow \bb{Z_2}\cup\{0\}.\]  The condition that the resulting 2-fold cover $Q_{n-1}^f$ has no 4-cycles  is equivalent to the condition that each 4-cycle of $Q_n$ has an odd number of edges $e$ with $f(e)=-1$. It follows that the opposite signing, $f'=-f$ will also give rise to a 4-cycle free 2-fold cover. Joining these oppositely signed copies of $Q_{n-1}$ by a perfect matching whose edges are all given the same sign (say, $+1$) yields a signing of $Q_n$ in which each 4-cycle has an odd number of negative edges, and thus gives rise to the desired cover.

Performing this process at the level of the adjacency matrix is exactly the construction given by Huang in \cite{HH}. Define \[A_1=\pmat{0&1\\1&0},~~~A_{n}=\pmat{A_{n-1}&I_{n-1}\\I_{n-1}&-A_{n-1}}.\] Then taking $f(u,v)=(A_n)_{u,v}$ defines a symmetric arc function on the hypercube $Q_n$. The resulting cover of index 2 is the unique girth six 2-fold cover of $Q_n$ determined by Cohen and Tits in \cite{CT}. Note that this cover is not distance-regular, however its distance distribution diagram is a tree which is not too far away from being a path. (See \cite{BCN} section 9.2E).

\subsection{Folded cubes}
The folded $n$-cube, denoted $\square_n$ is the antipodal quotient of $Q_{n}$. Equivalently, $\square_n$ is the graph obtained from $Q_{n-1}$ by adding a perfect matching between antipodal vertices. In \cite{BCN} the authors remark, without proof, that a 4-cycle free 2-fold cover of $\square_n$ exists if and only if $n$ is congruent to $0, 1$ mod 4 and $n>4$. (See \cite{BCN} section 9.2E). These covers are necessarily 2ev. A construction of these covers, as well as a proof of the non-existence of such covers for the other congruence classes can be found in the recent arxiv preprint of Alon and Zheng \cite{AZ}. 

Alon and Zheng have constructed what they call unitary signings of Cayley graphs for elementary abeliean 2-groups. The graphs in question are the folded $d$-cubes and the cartesian products of folded $(k+1)$-cubes with $(d-k)$-cubes. And unitary signings can be viewed as symmetric arc functions on the underling graphs which give rise to cyclic covers of index 4 and girth $>4$.

\subsection{The 3-fold cover of $Kn(7,2)$}
The Kneser graph $Y=Kn(7,2)$ has as vertices the 21 two element subsets of $\{0,1,\dots 6\}$. Two such subsets are adjacent if they are disjoint. Its spectrum is \[\{10^{(1)},1^{(14)},-4^{6}\}.\] There is a unique antipodal distance-regular graph $X$ of diameter 4 on 63 vertices. It's spectrum is \[\{10^{(1)},5^{(12)},1^{(14)},-2^{(30)},-4^{6}\}.\] As suggested by the spectrum, $X$ is a 3-fold 2ev-cover of $Y$, (See \cite{BCN} 13.2B).

\subsection{Drackn's and dracknn's}

As seen in Section 6, the drackn's are precisely the cyclic 2-ev covers of the complete graph, and the distance-regular antipodal covers of the complete bipartite graph (dracknn's) are precisely the cyclic 2-ev covers of $K_{n,n}$. In particular, the distance-regular 2-fold covers of $K_{n,n}$ are the Hadamard graphs, (See \cite{BCN} Section 1.8). More generally, if $H$ is a complex Hadamard matrix of Butson type (meaning the entries are $q$th roots of unity), with \[B=\pmat{0& H\\H^T&0}\] then $f(u,v)=B_{u,v}$ defines a symmetric arc function which gives rise to a distance-regular 2ev-cover of $K_{q,q}$.

\subsection{A non-abelian 2-ev cover}\label{nonab}
Many of our proof techniques are predicated on the cyclic or abelian nature of the group $\langle f \rangle$. In Theorem \ref{wreg1}, It was important that $\langle f \rangle$ acted regularly $\{1,\dots, r\}$. In Theorems \ref{drg1} and \ref{drg2}, we made (implicit) use of the fact that the irreducible representations of $\langle f \rangle$ were one-dimensional.

However, we do not lose all of our tools when considering non-abelian covers. If we work with any permutation representation $\tau$ for $\langle f \rangle$, the the spectrum of the resulting cover will decompose as the union of the spectra of the covers defined by the irreducible representations of $\langle f \rangle$. The regular representation is no longer the only interesting choice. As a first example of a non-regular 2ev-cover, consider the line graph of the Petersen graph.  This graph can be obtained as a 2ev-cover of $K_5$ by taking the 3-dimensional permutation representation of $S_3$, and the following assignment with $\tau_1=(1,2),\tau_2=(2,3),\tau_3=(1,3)$. \[\pmat{0&1&1&1&1\\1&0&\tau_1&\tau_2&\tau_3\\1&\tau_1&0&\tau_3&\tau_2\\1&\tau_2&\tau_3&0&\tau_1\\1&\tau_3&\tau_2&\tau_1&0}\]

\subsection{An informative non-example}
There are several ways to interpret the statement ``a cover with two new eigenvalues.'' A priori, it is not clear that we have chosen the correct interpretation. One alternative is to consider covers where the number of distinct eigenvalues has increased by 2. The following example shows that our definition is better suited to certain combinatorial investigations.

We consider a $\pm 1$ signing of $A(K_{3n})$ with three distinct eigenvalues, one of which is $-1$ (and so coincides with the spectrum of $K_{3n}$).

Construct an arc function with $f(u,v)=-1$ for edges $\{u,v\}$ of a complete bipartite subgraph $K_{n,n}$, and $f(u,v)=1$ on all other edges. This signed matrix has spectrum \[\{(2n-1)^{(2)},-1^{(3n-3)},(-n-1)^{(1)}\},\] compared to the spectrum \[\{(3n-1)^{(1)},(-1)^{(3n-2)}\}\] of $K_{3n}$. We see that the cover is possessed of two new and distinct eigenvalues and one new eigenvalue coinciding with an eigenvalue of the base graph. However it can be shown that this cover is not distance-regular.

\section{Further questions}
We have explicated the relationship between 2ev-covering graphs and the representations of their group valued adjacency matrices. Moreover we have shown that this is a natural setting in which to study combinatorial regularity of covering graphs. This study sits at the intersection of several research areas, and there are many directions in which it could progress. We name just a few that are most closely related to our present work.
\begin{itemize}
\item It may be possible to extend Theorem \ref{drg2} to start with graphs of larger diameter. However, we expect that there are no connected 2-ev covers in this case, since a result of Gardiner implies that any such cover could not be antipodal. See \cite{AG}.

\item If $Y^f$ is a 2ev-cover of $Y$ we have very strong conditions on the balancedness of  cycles in the gain graph $(Y,f)$. In Theorem \ref{drg2} the condition that $\lambda=a$ implies that for any triangle in $Y$, with vertices $a,b,c$, we have $f(a,b)f(b,c)f(c,a)=id.$ Perhaps there are cases where these conditions imply the existence of some interesting biased graphs.

\item It is possible that Theorem 5.1 can be extended to non-abelian 2ev-covers. However our proof does not seem to extend in any straightforward way. Alternatively, there may be non-abelian 2ev-covers of walk regular graphs which fail to be walk regular. We would be curious to hear of any examples. 

\item There are already a number of constructions of $\pm 1$ signings of adjacency matrices which have exactly two eigenvalues. It could be interesting to consider the associated covering graphs for some of these constructions. Moreover, it could be very interesting to extend existing methods for constructing 2ev-signings of adjacency matrices to construct cyclic 2ev-covers of larger index.

\end{itemize}

 \bibliography{2EVcites.bib}

\end{document}